\newtheorem{theorem}{Theorem}[section]
\newtheorem{lemma}[theorem]{Lemma}
\newtheorem{corollary}[theorem]{Corollary}
\numberwithin{equation}{section}
\title[Min-max-min problems on
the unit circle]{Inverse Bernstein inequalities and min-max-min problems on
the unit circle
}
\author{ Tam\'as Erd\'elyi}
\address{Department of Mathematics, Texas A\&M University,
College Station, Texas 77843, USA
}
\email{terdelyi@math.tamu.edu}
\author{Douglas P. Hardin$^*$}
\address{Center for Constructive Approximation, Department of Mathematics, \hspace*{.1in}
Vanderbilt University,
Nashville, TN 37240, USA  }
 \email{doug.hardin@vanderbilt.edu}
\author{Edward B. Saff$^*$}
\email{edward.b.saff@vanderbilt.edu}
\thanks{\noindent $^*$ The research of these authors was supported, in part,
by the U. S. National Science Foundation under grant  DMS-1109266.  
}
\date{July 15, 2013}                                           % Activate to display a given date or no date
\subjclass[2000]{11C08, 41A17, 30C15, 52A40.}
\keywords{Chebyshev constants, polarization inequalities, Riesz energy, potentials}
\begin{document}
\begin{abstract}
 We give a short and elementary proof of an inverse
Bernstein-type inequality found by S. Khrushchev for the derivative of a polynomial having all its zeros on the unit circle . The inequality is used to show that equally-spaced points solve a min-max-min problem
for the logarithmic potential of such polynomials. Using techniques recently developed for polarization (Chebyshev-type)
problems, we show that this  optimality also holds for a large class of potentials, including the Riesz potentials $1/r^s$ with
$s>0.$
\end{abstract}

\maketitle
\section{Inverse Bernstein-type inequality}

Inequalities involving the derivatives of polynomials often occur in
approximation theory (see, e.g. \cite{BE-95}, \cite{DL-93}).   One of the most familiar of these inequalities is due to Bernstein which provides an upper bound for the derivative of a polynomial on the unit circle
 $\mathbb{T}$   of the complex plane.
In \cite{K-09}, S. Khrushchev derived a rather striking inverse Bernstein-type
inequality, a slight improvement of which may be stated as follows.

\begin{theorem}\label{Thm1.1}
If
$$P(z) = \prod_{j=1}^n{(z-z_j)}\,, \qquad z_j \in \mathbb{T}\,,$$
$$z_j = e^{it_j}, \qquad 0 \leq t_1 < t_2 < \cdots < t_n < 2\pi\,, \quad
t_{n+1} := t_1 + 2\pi\,,$$
\begin{equation}\label{mdef}
m := \min_{1 \leq j \leq n}\left(\max_{t \in [t_j,t_{j+1}]}{|P(e^{it})|}\right),\,
\end{equation}
then
\begin{equation}\label{main}
|P^{\prime}(z)|^2 \geq
\left( \frac n2 \right)^2 \left( |P(z)|^2 + (m^2 - |P(z)|^2)_+ \right)
\geq \left( \frac {nm}{2} \right)^2\,, \qquad z \in \mathbb{T}\,,
\end{equation}
where   $(x)_+ := \max\{x,0\}$.
\end{theorem}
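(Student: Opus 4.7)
I would begin with a polar decomposition of $P$ on $\mathbb{T}$. Writing $e^{it} - e^{it_j} = 2i\sin((t-t_j)/2)\,e^{i(t+t_j)/2}$ and multiplying over $j$ gives
\[
P(e^{it}) = (2i)^n e^{i\alpha(t)}\,T(t), \qquad \alpha(t) := \tfrac{1}{2}\bigl(nt + \textstyle\sum_{j=1}^n t_j\bigr), \qquad T(t) := \prod_{j=1}^n \sin\bigl((t-t_j)/2\bigr),
\]
where $T$ is \emph{real}-valued. Differentiating, comparing with $\tfrac{d}{dt}P(e^{it}) = ie^{it}P'(e^{it})$, and using $\alpha'(t)=n/2$ yields the key identity
\[
|P'(e^{it})|^2 \;=\; v'(t)^2 + (n/2)^2\, v(t)^2, \qquad v(t) := |P(e^{it})|.
\]

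Given this identity, the desired inequality \eqref{main} is equivalent to $v'(t)^2 \ge (n/2)^2(m^2 - v(t)^2)_+$; the case $v(t)\ge m$ is trivial, so the content reduces to
\[
\Phi(t) := v'(t)^2 + (n/2)^2 v(t)^2 \;\ge\; (n/2)^2 m^2, \qquad t\in\mathbb{R},
\]
i.e., $|P'(z)|\ge (n/2)\,m$ for every $z\in\mathbb{T}$. I would prove this uniform bound arc-by-arc on $I_j := [t_j,t_{j+1}]$. Because $T$ is a real trigonometric polynomial of (half-integer) degree $n/2$ with exactly $n$ simple zeros, a Rolle/zero-count argument forces $T'$ to have exactly one zero $t_j^{\ast}$ in each open arc $(t_j,t_{j+1})$, so $v=2^n|T|$ is unimodal on $I_j$ with interior maximum $M_j := v(t_j^{\ast})\ge m$. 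At this maximum $v'(t_j^{\ast})=0$, so $\Phi(t_j^{\ast}) = (n/2)^2 M_j^2 \ge (n/2)^2 m^2$. A direct differentiation gives
\[
\Phi'(t) = 2 v'(t)\bigl[v''(t)+(n/2)^2 v(t)\bigr],
\]
so the remaining candidates for minima of $\Phi$ are the zeros of $L[v] := v'' + (n/2)^2 v$.

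The main obstacle is ruling out values $\Phi(t) < (n/2)^2 m^2$ at such zeros of $L[v]$. My expected route is to compare with the sinusoidal extremal $w(t) := m\sin((n/2)(t-c))$, which satisfies $L[w]\equiv 0$ and $w'(t)^2+(n/2)^2 w(t)^2\equiv(n/2)^2 m^2$ identically: on the critical arc $I_{j^{\ast}}$ with $M_{j^{\ast}}=m$, equality at the maximum $t_{j^{\ast}}^{\ast}$ pins down $\Phi$ there against $w$, while on the other arcs the surplus $M_j > m$ supplies the slack. An alternative route exploits that $\Phi(t)=|P'(e^{it})|^2$ is itself a trigonometric polynomial of degree $n-1$ (the top-frequency contributions of $(T')^2$ and $(n/2)^2 T^2$ cancel identically), and then seeks a sum-of-squares representation of $\Phi - (n/2)^2 m^2$ forced by the at least double zero at each $t_{j^{\ast}}^{\ast}$ where $M_{j^{\ast}}=m$, yielding the required non-negativity by a zero-count argument.
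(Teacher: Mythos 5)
Your opening identity $|P'(e^{it})|^2 = v'(t)^2 + (n/2)^2\,v(t)^2$ is correct and is exactly the paper's starting point (obtained there by writing $P(e^{it})=R(t)e^{i\varphi(t)}$ and computing $\varphi'(t)=n/2$ from the fact that $z/(z-z_j)$ maps $\mathbb{T}$ to $\mathrm{Re}\,w=1/2$), and your reduction of \eqref{main} to the uniform bound $v'(t)^2+(n/2)^2v(t)^2\ge (n/2)^2m^2$ is also correct. But the argument stops precisely where the real work begins: you yourself label the step of ruling out $\Phi<(n/2)^2m^2$ at zeros of $L[v]$ as ``the main obstacle'' and then offer only an ``expected route.'' Neither sketch is an argument. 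The comparison with $w(t)=m\sin((n/2)(t-c))$ is indeed the right germ --- it is essentially what the paper does --- but ``equality at the maximum pins down $\Phi$ there against $w$'' conceals all of the content. The paper's version: at a point $t_0$ with $v(t_0)<m$, match $T(t):=m^2\cos^2(n(t-\alpha)/2)$ to $Q:=v^2$ in value and in the sign of the derivative at $t_0$, and prove $|Q'(t_0)|\ge|T'(t_0)|$ by a \emph{global} zero count: if this failed, the nonzero trigonometric polynomial $T-Q$ of degree $n$ would have at least two zeros on every arc $[t_j,t_{j+1})$ --- because $Q(t_j)=0\le T(t_j)$ while $Q\ge m^2\ge T$ at the point where $Q$ attains its arc maximum, which is exactly where the definition \eqref{mdef} of $m$ as the minimum over arcs of the arc maxima enters --- plus four on the arc containing $t_0$, giving at least $2n+2$ zeros, a contradiction. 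This then yields $(v')^2\ge (n^2/4)(m^2-v^2)$ at $t_0$. Your sketch never invokes the hypothesis that $\max_{[t_j,t_{j+1}]}|P|\ge m$ on \emph{every} arc; some such global count is unavoidable, and supplying it is the proof.

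Your second route cannot work as stated: $\Phi-(n/2)^2m^2$ is a trigonometric polynomial of degree $n-1$ and may have up to $2n-2$ zeros, while the tangency you propose to exploit guarantees only a single double zero (at the interior maximum of an arc on which $M_{j^*}=m$, and there may be only one such arc). Two zeros force nothing for $n\ge 2$, so non-negativity cannot be extracted from that count. A smaller slip: in listing the candidates for minima of $\Phi$ you omit the nodes $t_j$, where $v=2^n|T|$ is not differentiable; this is harmless because $\Phi=4^n\bigl((T')^2+(n/2)^2T^2\bigr)$ is smooth there, but it should be said. As it stands, the proposal reproduces the paper's equation \eqref{1.2} and then leaves the essential comparison step unproved.
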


Khrushchev used a potential theoretic method to prove his inequality.
Here we offer a simple and short proof based on an elementary
zero counting argument.

\begin{proof}
Write
$$P(e^{it}) = R(t)e^{i\varphi(t)}\,, \qquad R(t) := |P(e^{it})|\,,$$
where $R$ and $\varphi$ are differentiable functions on
$$[0,2\pi) \setminus \{t_1,t_2, \ldots, t_n\}\,.$$
Since $P^{\prime}(e^{it})$ is a continuous function of $t \in {\Bbb R}$,
in the rest of the proof we may assume that
$$t \in [0,2\pi) \setminus \{t_1,t_2, \ldots, t_n\}\,.$$
We have
$$P^{\prime}(e^{it})e^{it}i =
R^\prime(t)e^{i\varphi(t)} + iR(t)e^{i\varphi(t)}\varphi^\prime(t)\,,$$
and it follows that
\begin{equation}|P^{\prime}(e^{it})|^2 = (R^\prime(t))^2 + R(t)^2(\varphi^\prime(t))^2\, . \label{1.1}\end{equation}
Using the fact that  $w=z/(z-z_j)$ maps $\mathbb{T}$ onto the vertical line ${\rm Re}\, w=1/2,$ we have

$$ \varphi^{\prime}(t) =
\text{\rm Re} \left( \varphi^{\prime}(t) - \frac{R^{\prime}(t)}{R(t)} \,i \right)
= \text{\rm Re} \left( \frac {P^{\prime}(e^{it})e^{it}}{P(e^{it})} \right)
= \text{\rm Re} \left( \sum_{j=1}^n{\frac{e^{it}}{e^{it} - e^{it_j}}} \right)= \frac n2 \,.
$$
Thus from  \eqref{1.1} we get
\begin{equation}|P^{\prime}(e^{it})|^2 = (R^\prime(t))^2 + \left( \frac n2 \right)^2R(t)^2\,, \label{1.2}\end{equation}
and so, if $R(t) \geq m,$ then \eqref{main} follows immediately.\

Assume now that
$R(t) < m$. Observe that $Q$ defined by $Q(t) := R(t)^2 = |P(e^{it})|^2$
is a real trigonometric polynomial of degree $n$; that is, $Q \in {\mathcal T}_n$.
Now let
$$t_0 \in [0,2\pi) \setminus \{t_1,t_2, \ldots, t_n\}$$
be fixed, and let $T \in {\mathcal T}_n$ be defined by
$$T(t) := m^2 \cos^2(n(t-\alpha)/2) = \frac{m^2}{2}\,(1 + \cos(n(t-\alpha))\,,$$
where
$\alpha \in {\Bbb R}$ is chosen so that
\begin{equation}T(t_0) = Q(t_0) \label{1.3}\end{equation}
and
\begin{equation}\text{\rm sign}(T^{\prime}(t_0)) = \text{\rm sign}(Q^{\prime}(t_0))\,. \label{1.4}\end{equation}
We claim that
\begin{equation}|Q^{\prime}(t_0)| \geq |T^{\prime}(t_0)|\,. \label{1.5}\end{equation}
Indeed, $|Q^{\prime}(t_0)| < T^{\prime}(t_0)|$ together with \eqref{1.3} and \eqref{1.4} would imply
that the not identically zero trigonometric polynomial $T-Q \in {\mathcal T}_n$ had at least
$2n+~2$ zeros in the period $[0,2\pi)$ (at least two zeros on each of the intervals
$[t_1,t_2)$, $[t_2,t_3)$,$\ldots$, $[t_n,t_{n+1})$, and at least four zeros on the
interval $(t_j,t_{j+1})$ containing $t_0$) by counting multiplicities, a contradiction. Thus
 \eqref{1.5} holds and implies that
$$|Q^{\prime}(t_0)| \geq |m^2 n \cos(n(t_0-\alpha)/2)\sin(n(t_0-\alpha)/2)|\,,$$
which, together with \eqref{1.3}, yields
\begin{equation*}\begin{split} |Q^{\prime}(t_0)|^2& \geq  n^2(m^2 \cos^2(n(t_0-\alpha)/2))(m^2\sin^2(n(t_0-\alpha)/2)) \\
& = n^2 (|Q(t_0)|(m^2 - |Q(t_0)|)\,. \end{split}\end{equation*}
Substituting $Q(t_0) = R(t_0)^2$ and $Q^{\prime}(t_0) = 2R(t_0)R^{\prime}(t_0)$
in the above inequality, we conclude that
$$(R^{\prime}(t_0))^2 \geq \frac {n^2}{4}(m^2 - R(t_0)^2)\,. \label{1.6}$$
Finally, combining this last inequality with \eqref{1.2}   and recalling  that $R(t_0) = |P(e^{it_0})|$ yields \eqref{main}.
\end{proof}

 A natural question that arises is finding the maximal value $  m^*(n)$ of the quantity $m$ given in \eqref{mdef} or, equivalently (using the notation of Theorem~\ref{Thm1.1}),  determining
\begin{equation}m^*(n):=\max_{\omega_n \in \Omega_n} \min_{1 \leq j \leq n} \max_{t \in [t_j,t_{j+1}]}{\left|\prod_{k=1}^n{(e^{it}-z_k)}\right|\,}, \, \qquad z_k=e^{it_k},\label{n.3}\end{equation}
where $\Omega_n$ is the collection of all $n$-tuples
$\omega_n \in [0, 2\pi)^n$ of the form
$$\omega_n = (t_1,\ldots,t_n), \qquad
0 \leq t_1 \leq \ldots \leq t_n < 2\pi.$$
 In Corollary 6.9 of \cite{K-09}, Khrushchev proved that $m^*(n)=2,$ the value of $m$ corresponding to $P(z)=z^n-1$ for which equality holds throughout in \eqref{main}. Here
 we deduce this fact as a simple consequence of Theorem \ref{Thm1.1}.

\begin{corollary}\label{cor2}
Let $m^*(n)$ be as in \eqref{n.3}.  Then $m^*(n)=2$ and this  maximum is attained only for $n$ distinct equally spaced points $\{z_1,\ldots, z_n \}$ on the unit circle.
\end{corollary}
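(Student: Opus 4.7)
The plan is to deduce the bound $m\leq 2$ by evaluating Theorem~\ref{Thm1.1} at the zeros of $P$ themselves, and then pinching the resulting lower bound on the discriminant against the Hadamard upper bound on the Vandermonde.

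First, for $P(z)=\prod_{j=1}^n(z-z_j)$ with $m$ as in \eqref{mdef}, I would evaluate \eqref{main} at $z=z_j$. Since $|P(z_j)|=0$, this reduces to $|P'(z_j)|^2\geq (n/2)^2 m^2$, so $|P'(z_j)|\geq nm/2$ for each $j$. Multiplying these $n$ inequalities and using $P'(z_j)=\prod_{k\neq j}(z_j-z_k)$ yields
\[
\prod_{1\leq j<k\leq n}|z_j-z_k|^2 \;=\; \prod_{j=1}^n |P'(z_j)| \;\geq\; \left(\frac{nm}{2}\right)^n.
\]
Next, I would bound this same discriminant from above by applying Hadamard's inequality to the Vandermonde matrix $M=(z_j^{k-1})_{j,k=1}^n$. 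Since $|z_j|=1$, each row $(1,z_j,\ldots,z_j^{n-1})$ of $M$ has squared Euclidean norm $n$, and $|\det M|^2=\prod_{j<k}|z_j-z_k|^2$, so
\[
\prod_{1\leq j<k\leq n}|z_j-z_k|^2 \;=\; |\det M|^2 \;\leq\; \prod_{j=1}^n n \;=\; n^n.
\]
Combining gives $(nm/2)^n\leq n^n$, hence $m\leq 2$.

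For the equality case, $m=2$ forces equality in Hadamard's inequality, so the rows of $M$ must be pairwise orthogonal. The inner product of rows $j$ and $l$ equals $\sum_{k=0}^{n-1}(z_j/z_l)^{k}$, which by the geometric series formula vanishes precisely when $(z_j/z_l)^n=1$ and $z_j\neq z_l$. Imposing this for every pair $j\neq l$ forces $\{z_1,\ldots,z_n\}$ to be a coset of the $n$-th roots of unity in $\mathbb{T}$, i.e.\ $n$ distinct equally spaced points; conversely, the paper already observes that $P(z)=z^n-1$ yields $|P(e^{it})|=2|\sin(nt/2)|$ and hence $m=2$. The main point is the two-sided squeeze of the discriminant between $(nm/2)^n$ and $n^n$; no further obstacle is expected, since the equality analysis reduces to a standard geometric-series computation.
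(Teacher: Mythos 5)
Your proof is correct, and it takes a genuinely different route from the paper's. The paper argues by contradiction through complex analysis: assuming $m>2$, Theorem~\ref{Thm1.1} gives $|P'(z)|>n$ on all of $\mathbb{T}$, and then Gauss--Lucas plus the minimum modulus principle applied to $f(z)=P'(z)/z^{n-1}$ (which equals $n$ at infinity) yields the contradiction; the equality case follows because $f$ must then be constant, forcing $P'(z)=nz^{n-1}$. You instead use the inequality \eqref{main} only at the $n$ zeros, where it degenerates to $|P'(z_j)|\ge nm/2$, multiply to obtain the discriminant $\prod_{j<k}|z_j-z_k|^2\ge(nm/2)^n$, and cap this by $n^n$ via Hadamard's inequality on the Vandermonde matrix; the equality analysis then comes from the orthogonality condition in Hadamard, which the geometric series computation converts into the statement that the $z_j$ form a coset of the $n$th roots of unity. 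Your argument buys two things: it is purely algebraic (no Gauss--Lucas, no minimum modulus principle, and it uses the lower bound of Theorem~\ref{Thm1.1} only at $n$ points rather than on all of $\mathbb{T}$), and it proves $m\le 2$ directly for every configuration rather than arguing via an extremal polynomial $P_o$, so it sidesteps any discussion of whether the maximum in \eqref{n.3} is attained. The paper's argument is shorter for a reader who has those complex-analytic tools at hand and extracts the rigidity statement in one stroke from the constancy of $f$. One housekeeping remark that applies to both proofs: $\Omega_n$ permits coincident nodes, where Theorem~\ref{Thm1.1} does not apply, but then some interval $[t_j,t_{j+1}]$ degenerates to a zero of $P$ and $m=0$, so such configurations are trivially not extremal.
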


In other words, for any monic polynomial of degree $n$ all of whose zeros lie on the unit circle, there must be some sub-arc formed from consecutive zeros on which the modulus of the polynomial is at most 2.
\begin{proof}[Proof of Corollary~\ref{cor2}]
Assume $m^*(n)>2$.
According  to Theorem~\ref{Thm1.1}, $|P'_o(z)| >n$ for all $z$ on $\mathbb{T}$,
 where $P_o$ is a  monic polynomial of degree $n$ for which the maximum value $m^*(n)$ is attained.
By the Gauss-Lucas theorem, $P'_o$ has all its zeros in the open unit disk  (clearly it can't
have any on $\mathbb{T}$). So now consider the $f(z):=P'_o(z)/ {z^{n-1}}$, which is analytic
on and outside $\mathbb{T}$, even at infinity where it equals $n.$ Since
$f$ does not vanish outside or on $\mathbb{T},$ its modulus must attain its minimum
on $\mathbb{T}$.  But $|f(z)|>n$ on $\mathbb{T}$, while $f(\infty)=n$, which gives the desired contradiction.  Thus
$m^*(n)=2$ and the argument above also shows that if this maximum is attained by a polynomial
$P_o$, then $|f(z)|=2$ for all $z$ on or outside $\mathbb{T}$, which implies that $f$ is constant and so $P_o$ has equally
spaced zeros on $\mathbb{T}$.
\end{proof}
 Observe that the determination of $m^*(n)$ can alternatively be viewed as a min-max-min problem on the unit circle for the logarithmic potential $\log(1/r)$ with $r$ denoting Euclidean distance between points on $\mathbb{T}$.  In the next section we consider such problems for a general class of potentials.

\section{Min-max-min problems on $\mathbb{T}$}

Let $g$ be a positive, extended real-valued, even function defined on ${\Bbb R}$ that is periodic
with period $2\pi$ and satisfies $g(0)=\lim_{t\to 0}g(t)$.  Further suppose that $g$ is non-increasing and strictly convex on $(0,\pi]$.  For  $\omega_n =(t_1,\ldots,t_n)\in \Omega_n$, we set
\begin{equation}\label{potent} P_{\omega_n}(t) := \sum_{j=1}^n{g(t-t_j)}.
\end{equation}
Here and in the following we assume that
$t_j$ is extended so that $$t_{j+n}=t_j+2\pi, \qquad (j\in\mathbb{Z});$$ in particular, we have $t_0=t_n-2\pi$ and $t_{n+1}=t_1+2\pi$.
For $\omega_n=(t_1,\ldots,t_n)\in\Omega_n$ and $\gamma\in[0,2\pi)$,  let $\omega_n+\gamma$ denote the element in $\Omega_n$ corresponding to the set $\{e^{i(t_k+\gamma)}\}_{k=1}^n$.
Then $P_{\omega_n+\gamma}(t)=P_{\omega_n}(t-\gamma)$.
We further
let
$\widetilde{\omega}_n :=(\widetilde{t}_1,\ldots,\widetilde{t}_n)$
denote the equally-spaced configuration given by
$$\widetilde{t}_j := 2(j-1)\pi/n\,, \qquad j=1,2,\ldots,n\,.$$
By the convexity of $g$, it follows that
$$\min_{t\in[0,2\pi)} P_{\widetilde{\omega}_n}(t) = P_{\widetilde{\omega}_n}(\pi/n)\,.$$

Motivated by recent articles on polarization of discrete potentials on the unit circle (cf. \cite{A-09}, \cite{ABE-12}, \cite{ES-13}, \cite{HKS-12}) we shall prove the following generalization of Corollary~\ref{cor2}.

\begin{theorem}\label{Thm2.1}
Let $g$ be a positive, extended real-valued, even function defined on ${\Bbb R}$ that is periodic
with period $2\pi$ and satisfies $g(0)=\lim_{t\to 0}g(t)$.   Suppose further that $g$ is non-increasing and strictly convex on $(0,\pi]$. Then we have
\begin{equation}\min_{\omega_n \in \Omega_n}{\left\{\max_{1 \leq j \leq n}{\left\{ \min_{t \in [t_j,t_{j+1}]}{P_{\omega_n}(t)} \right\}}\right\}} = P_{\widetilde{\omega}_n}(\pi/n)\,;
\end{equation}
that is, the solution to the min-max-min problem on $\mathbb{T}$ is given by $n$ distinct  equally-spaced  points on $\mathbb{T}$ and, moreover, these are the only solutions.
\end{theorem}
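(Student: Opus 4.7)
The plan is to establish the stronger averaged inequality
$$
\sum_{j=1}^n m_j(\omega_n) \;\ge\; n\, P_{\widetilde\omega_n}(\pi/n), \qquad \omega_n \in \Omega_n,
$$
where $m_j(\omega_n) := \min_{t\in[t_j,t_{j+1}]}P_{\omega_n}(t)$; the assertion of the theorem then follows by the trivial pigeonhole $\max_j m_j \ge \tfrac1n\sum_j m_j$, and equality at $\widetilde\omega_n$ holds because by the symmetry and convexity already noted before the theorem all $n$ arc-minima of $P_{\widetilde\omega_n}$ coincide with $P_{\widetilde\omega_n}(\pi/n)$.

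The core identity, valid for any choice of minimizer $\tau_j \in [t_j,t_{j+1}]$ of $P_{\omega_n}$ on the $j$-th arc (set $\omega_n^* := (\tau_1,\ldots,\tau_n) \in \Omega_n$), is
$$
\sum_{j=1}^n m_j(\omega_n) \;=\; \sum_{j,k=1}^n g(\tau_j - t_k) \;=\; \sum_{k=1}^n P_{\omega_n^*}(t_k),
$$
where the last equality uses evenness of $g$. Because $\omega_n$ and $\omega_n^*$ interleave on $\mathbb{T}$ (one has $\tau_{k-1} \le t_k \le \tau_k$), the point $t_k$ lies in the arc $[\tau_{k-1},\tau_k]$ of $\omega_n^*$, so $P_{\omega_n^*}(t_k) \ge m_{k-1}(\omega_n^*)$, which summed yields the ``dual'' recursion
$$
\sum_{j=1}^n m_j(\omega_n) \;\ge\; \sum_{j=1}^n m_j(\omega_n^*). \qquad (\dagger)
$$

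By rotation-invariance we may fix $t_1 = 0$, and the resulting slice of $\Omega_n$ is compact; the continuous functional $\sum_j m_j$ attains its minimum there at some $\omega_n^\dagger$. Applying $(\dagger)$ at $\omega_n^\dagger$ forces equality, which in turn forces each $t_k^\dagger$ to be itself a minimizer of $P_{(\omega_n^\dagger)^*}$ on the arc $[\tau_{k-1}^\dagger,\tau_k^\dagger]$. Thus $(\omega_n^\dagger,(\omega_n^\dagger)^*)$ is a \emph{mutually minimizing} interleaved pair: each point of one configuration is an arc-minimum of the potential generated by the other.

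The main obstacle is the rigidity claim that any such mutually minimizing interleaved pair on $\mathbb{T}$ must consist of two rotations of $\widetilde\omega_n$ offset by $\pi/n$. I would attack this via the $2n$ critical-point equations $\sum_l g'(\tau_j-t_l)=0$ and $\sum_l g'(t_k-\tau_l)=0$, using the strict monotonicity of $g'$ on $(0,\pi)$ (from strict convexity of $g$ on $(0,\pi]$) to show that paired terms at nearly symmetric distances must cancel exactly, propagating through the combined $2n$-point configuration $\omega_n \cup \omega_n^*$ in cyclic order until all consecutive gaps are forced to equal $\pi/n$. Once the rigidity lemma is established, the minimum of $\sum_j m_j$ equals $nP_{\widetilde\omega_n}(\pi/n)$, proving the averaged inequality; the ``moreover'' assertion follows by applying the same rigidity in the equality case $\max_j m_j = P_{\widetilde\omega_n}(\pi/n)$, in which all $m_j$'s must coincide, forcing $\sum_j m_j = nP_{\widetilde\omega_n}(\pi/n)$ and hence $(\omega_n,\omega_n^*)$ itself to be mutually minimizing.
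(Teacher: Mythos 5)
Your strategy is genuinely different from the paper's: the paper deduces the theorem in a few lines from a comparison lemma (Lemma~2.2, built on the Hardin--Kendall--Saff spreading maps $\tau_{j,\Delta}$), which produces a single arc $[t_\ell,t_{\ell+1}]$ on which a rotation of $P_{\widetilde{\omega}_n}$ lies below $P_{\omega_n}$. Your averaged inequality, the identity $\sum_j m_j(\omega_n)=\sum_k P_{\omega_n^*}(t_k)$, the descent step $(\dagger)$, the pigeonhole reduction, and the equality computation at $\widetilde{\omega}_n$ are all correct and attractive. But the argument as written does not prove the theorem, because everything rests on the rigidity claim --- that a mutually minimizing interleaved pair must consist of two rotated copies of $\widetilde{\omega}_n$ --- and for that you offer only a one-sentence plan of attack. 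That claim is not a routine verification; it is essentially the whole content of the theorem, since the descent inequality $(\dagger)$ by itself only compares one configuration to another and yields no lower bound for $\sum_j m_j$.

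The proposed attack also has concrete obstacles. The critical-point equations $\sum_l g'(\tau_j-t_l)=0$ need not hold: $g$ is only assumed convex on $(0,\pi]$, so $g'$ may fail to exist at some points, and the minimizer $\tau_j$ may sit at an endpoint of $[t_j,t_{j+1}]$ when $g'(0^+)$ is finite (nothing in the hypotheses excludes this), in which case only a one-sided variational inequality is available. Even granting smoothness and interior minimizers, the assertion that ``paired terms at nearly symmetric distances must cancel exactly, propagating in cyclic order'' is not an argument: you have $2n$ coupled nonlinear equations, and it is not clear how cancellation at one point forces cancellation at the next without something like the two-point convexity inequality \eqref{twopts} and the spreading construction that the paper's Lemma~2.2 packages. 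There are also smaller points to patch: lower semicontinuity of $\sum_j m_j$ on the compact slice $\{t_1=0\}$ (needed for attainment of the minimum, including near configurations with coincident points when $g(0)=+\infty$), and the rotation required to bring $(\omega_n^\dagger)^*$ back into that slice before invoking minimality. Until the rigidity lemma is actually proved, the proof is incomplete at its central step.
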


\subsection{Logarithmic and Riesz kernels\label{example}} It is straightforward to verify that $g(t)=g_{\log}(t):=\log(1/|e^{it}-1|)=-\log(2 \sin |t/2|)$   satisfies the hypotheses of Theorem~\ref{Thm2.1} providing an  alternate proof of Corollary~\ref{cor2} .  Furthermore, for the case (relating to Euclidean distance),
 \begin{equation}\label{fs}
g(t)=g_s(t):=  |e^{it}-1|^{-s}=(2 \sin |t/2|)^{-s}, \, s>0,
 \end{equation}
we obtain the {\em Riesz $s$-potential} and it is again easily verified that $g_s$ satisfies the hypotheses of
Theorem~\ref{Thm2.1}.  Consequently, with $z_k=e^{it_k},$\\
\begin{equation}\label{n.4} \min_{\omega_n \in \Omega_n} \max_{1 \leq j \leq n} \min_{t \in [t_j,t_{j+1}]}\sum_{k=1}^n{|e^{it}-z_k|^{-s}}=\sum_{k=1}^n|e^{i\pi /n}-e^{2k\pi i/n}|^{-s}= M^s_n(\mathbb{T}),
\end{equation}
where $M^s_n(\mathbb{T})$ is the \textit{Riesz $s$-polarization constant} for $n$ points on the unit circle (cf. \cite{HKS-12}).  We remark that
for $s$ an even
integer, say $s=2m$, the precise value of $M^{s}
_n(\mathbb{T})$ can be expressed as a polynomial in $n$; namely, as a consequence of the formulas derived in \cite{BHS-09},
\begin{equation}
\label{Mn}
M_n^{2m}(\mathbb{T})= \frac{2}{(2\pi)^{2m}}
\sum_{k=1}^m n^{2k}\zeta(2k)\alpha_{m-k}(2m)(2^{2k}-1),\quad m \in \mathbb{N}, \end{equation}
where $\zeta(s)$ is the classical Riemann zeta function and $\alpha_j(s)$ is defined via the power series for
$\text{\rm sinc\,} z = (\sin \pi z)/(\pi z):$
$$(\text{\rm sinc\,} z)^{-s} = \sum_{j=0}^\infty \alpha_j(s) z^{2j}\,;
\quad  \alpha_0(s)=1\,,$$
see Corollary~3 from \cite{HKS-12}.
In particular,
\begin{equation*}
M^2_n(\mathbb{T})= \frac{n^2}{4},\quad
 M_n^4(\mathbb{T})  =
 \frac{n^2}{24}+\frac{n^4}{48} ,\quad
 M_n^6(\mathbb{T})  = \frac {n^2} {120} + \frac {n^4} {192} + \frac {n^6} {480}.
\end{equation*}

\subsection{Proof of Theorem \ref{Thm2.1}. \label{proof}} Theorem~\ref{Thm2.1} is a consequence of the following lemma which is the basis of the proof of the polarization theorem established by Hardin, Kendall and Saff in \cite{HKS-12}. (In Section 3 we state a slightly stronger
version of this polarization result as Theorem~\ref{Thm3.1} and present some related results.)

\begin{lemma}\label{Lemma2.2}
Let $g$ be as in Theorem~\ref{Thm2.1} and suppose $\omega_n=( t_1, \ldots, t_n)$ and $\omega'_n=( t'_1, \ldots, t'_n)$ are   in $\Omega_n$.
Then there is some $\ell\in \{0,1,\ldots,n\}$ and some $\gamma \in [0,2\pi)$ (where $\ell$ and $\gamma$ depend on $\omega_n$ and $\omega'_n$ but not on $g$) such that
\begin{equation}
\label{eqn2.2}P_{\omega_n'}(t-\gamma) \leq P_{ \omega_n}(t)\,, \qquad t \in [t_\ell, t_{\ell+1}],
\end{equation}  and  $ [t_{\ell},t_{\ell+1}] \subset [t'_{\ell}+\gamma,t'_{\ell+1}+\gamma]$.

The inequality is strict for $t\in(t_\ell, t_{\ell+1})$ unless  $t_{j+1}-t_j=t'_{j+1}-t'_j$ for all
$j=1,\ldots,n$.
\end{lemma}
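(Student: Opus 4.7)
The plan is to prove the lemma in two stages: a combinatorial argument to establish the containment, followed by an analytic argument based on the monotonicity and convexity of $g$ that establishes the pointwise potential inequality on the chosen arc.

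For the combinatorial step, write $\delta_j := t_{j+1} - t_j$ and $\delta'_j := t'_{j+1} - t'_j$ for the gap sequences of $\omega_n$ and $\omega'_n$; each sums to $2\pi$. The containment $[t_\ell, t_{\ell+1}] \subset [t'_\ell + \gamma, t'_{\ell+1} + \gamma]$ is equivalent to $\gamma \in [t_{\ell+1} - t'_{\ell+1},\, t_\ell - t'_\ell]$, a nonempty interval precisely when $\delta_\ell \leq \delta'_\ell$. Since $\sum_j(\delta_j - \delta'_j) = 0$, pigeonhole produces at least one such index $\ell$; in the degenerate case $\delta_j = \delta'_j$ for every $j$, the two configurations coincide up to a rotation and the inequality holds as an equality.

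For the analytic step, $\gamma$ must be selected carefully within the admissible interval, and $\ell$ may need to be further refined. A natural candidate is the concentric alignment $\gamma := \bigl((t_\ell + t_{\ell+1}) - (t'_\ell + t'_{\ell+1})\bigr)/2$, which equalizes the midpoints of the two arcs. For $t \in [t_\ell, t_{\ell+1}]$, the containment together with monotonicity of $g$ on $(0, \pi]$ yields the two endpoint comparisons $g(t - t_\ell) \geq g(t - t'_\ell - \gamma)$ and $g(t - t_{\ell+1}) \geq g(t - t'_{\ell+1} - \gamma)$, since the $\omega'_n + \gamma$-endpoints sit at no smaller arc-distance from $t$ than the corresponding $\omega_n$-endpoints. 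The remaining $n-2$ pairs $(t_{\ell+k},\, t'_{\ell+k} + \gamma)$, $k = 2, \ldots, n - 1$, should then be handled by a Karamata/majorization argument on the sorted arc-distance multisets $\{|t - t_j|\}$ and $\{|t - t'_j - \gamma|\}$, invoking convexity of $g$ on $(0, \pi]$.

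The main obstacle will be those $n - 2$ far pairs, since naive identity pairing does not in general yield pointwise arc-distance dominance: one can construct small-$n$ examples (already at $n = 4$) in which the second-sorted arc-distance of the two multisets from an interior $t$ violates the pointwise comparison. The remedy is either to refine the choice of $\ell$ so that the signed displacements $\tau_j := t_j - t'_j - \gamma$ form a cyclically unimodal pattern around the circle, or to replace the pointwise matching by a more global majorization/rearrangement argument that exploits the cyclic balance $\sum_j (\delta_j - \delta'_j) = 0$ together with the structure of the containing arc. Once the non-strict inequality is established, strict convexity of $g$ on $(0, \pi]$ forces at least one of the comparisons above to be strict whenever $(\delta_j) \neq (\delta'_j)$, and this strictness propagates to the interior of $[t_\ell, t_{\ell+1}]$, yielding the final clause of the lemma.
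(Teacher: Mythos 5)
Your proposal correctly isolates the hard part of the lemma but does not resolve it, so there is a genuine gap. The pigeonhole choice of $\ell$ (any index with $\delta_\ell \le \delta'_\ell$) together with midpoint alignment of $\gamma$ gives you the containment and the two endpoint comparisons, but---as you yourself observe---it does not control the remaining $n-2$ terms, and you leave the treatment of those terms as a choice between two unexecuted strategies (``refine the choice of $\ell$'' or ``a more global majorization argument''). Neither is carried out, and neither is obviously available: a Karamata-type argument would require showing that for \emph{every} $t$ in the arc the multiset of reduced arc-distances $\{|t-t'_j-\gamma|\}$ suitably dominates $\{|t-t_j|\}$, yet the sums of these arc-distances are not equal in general, so classical majorization does not apply, and it is far from clear that the pigeonhole $\ell$ and the midpoint $\gamma$ are even the right choices for any such relation to hold.

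The paper's proof supplies exactly the missing mechanism, and it is quite different from what you propose. Rather than comparing the two potentials term-by-term for a fixed $t$, it interpolates between the configurations: one solves the cyclic second-difference system $\delta'_j=\delta_j-\Delta_{j+1}+2\Delta_j-\Delta_{j-1}$ for a nonnegative vector $\boldsymbol{\Delta}$ with some $\Delta_\ell=0$ (this linear-algebra fact is where the correct $\ell$ comes from, not from pigeonhole), and then realizes the passage from $\omega_n$ to $\omega'_n+\gamma$ as a composition of elementary moves, each spreading one adjacent pair of points symmetrically apart. Because $\Delta_\ell=0$ and the $\Delta_k$ are nonnegative, every elementary move pushes points away from (or fixes) the distinguished arc $[t_\ell,t_{\ell+1}]$, and the two-point convexity inequality \eqref{twopts} shows each move can only decrease the potential there; a subdivision into $m$ small steps handles the case where the displacements would otherwise exceed half the separation. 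To complete your approach you would essentially have to reconstruct this decomposition (or prove the global dominance claim from scratch), so as it stands the proposal is an accurate diagnosis of the difficulty rather than a proof.
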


\begin{proof}[Sketch of proof]
This lemma follows from techniques developed in  \cite{HKS-12}, specifically from Lemmas 5 and 6 in that paper.
For the convenience of the reader, we  provide here an outline of its  proof.   First, the convexity of $g$ implies   that,
for $n=2$, the inequality
\begin{equation}\label{twopts}
P_{(t_1-\Delta,\, t_2+\Delta)}(t)<P_{(t_1 ,\, t_2 )}(t), \qquad t\in (t_1,t_2),
\end{equation}
holds for sufficiently small $\Delta>0$ (this observation was also used in \cite{ABE-12}).   That is, the potential due to two points decreases on an interval when the points are moved symmetrically {\em away } from the interval.   For simplicity, we consider the case that $$\text{sep}(\omega_n):=\min_j (t_{j+1}-t_j)>0,$$  (see \cite{HKS-12} for the case of coincident points where $\text{sep}(\omega_n)=0$).

Next, using elementary linear algebra, we find a   vector $\boldsymbol{\Delta}=(\Delta_1,\ldots,\Delta_n)$ such that (a) $\Delta_k\ge 0$ for  all $k$, (b)  $\Delta_{\ell}=0$ for some $\ell$ and (c) $\boldsymbol{\Delta}$ solves the equations
\begin{equation}\label{Delta}
(t_{j+1}'-t_{j}')=(t_{j+1}-t_j)-\Delta_{j+1}+2\Delta_j -\Delta_{j-1}, \qquad (j=1,\dots, n)
\end{equation}
where we take $\Delta_{0}:=\Delta_n$ and $\Delta_{n+1}:=\Delta_1$.   For $j=1, \ldots, n$,  consider the transformation $$\tau_{j,\Delta}(\omega_n):=(t_1,\ldots,t_{j-2},t_{j-1}-\Delta,t_j+\Delta, t_{j+1},\ldots,t_n).$$
Then \eqref{Delta} implies that $\omega''_n:=\tau_{1,\Delta_1}\circ\tau_{2,\Delta_2}\circ\cdots\circ\tau_{n,\Delta_n}(\omega_n)$ equals
$\omega_n'+\gamma$ for some $\gamma\in[0,2\pi)$.   If $\max_j\Delta_j\le (1/2) \text{sep}(\omega_n)$ then, since $\Delta_\ell=0$ and
$\Delta_k\ge0$, we may  apply the inequality  \eqref{twopts} $n$ times to obtain
\eqref{eqn2.2}. Moreover, unless $\Delta_k=0$ for all $k$,  inequality \eqref{Delta} is strict.   If $\max_j\Delta_j> (1/2) \text{sep}(\omega_n),$ then we may choose $m$ such that  $(1/m)\max_j\Delta_j< (1/2) \text{sep}(\omega_n)$  and then recursively applying $\tau_{(1/m) \boldsymbol{\Delta}}$ to $\omega_n$ the number $m$ times,  we again obtain \eqref{eqn2.2}.

Finally, since $\Delta_{\ell}=0$ and $\Delta_{\ell-1},\Delta_{\ell+1}\ge 0$, we have $[t_{\ell},t_{\ell+1}]\subset [t''_{\ell},t''_{\ell+1}]  = [t'_{\ell}+\gamma,t'_{\ell+1}+\gamma]$.
\end{proof}

\begin{proof}[Proof of Theorem~\ref{Thm2.1}]

Let $\omega_n\in\Omega_n$ be fixed but arbitrary and recall that $\widetilde{\omega}_n$ denotes an equally spaced configuration.  By Lemma~\ref{Lemma2.2}, there is some  $\ell\in \{0,1,\ldots,n\}$ and some $\gamma \in [0,2\pi)$ such that
$$P_{\widetilde{\omega}_n}(t-\gamma) \leq P_{ \omega_n}(t)\,, \qquad t \in [t_\ell, t_{\ell+1}].$$
Hence,
\begin{equation*}
\begin{split}
P_{\widetilde{\omega}_n}(\pi/n)& = \min_{t\in [0,2\pi)}P_{\widetilde{\omega}_n}(t)\le   \min_{t \in [t_\ell, t_{\ell+1}]}P_{\widetilde{\omega}_n}(t-\gamma)\\
& \le \min_{t \in [t_\ell,t_{\ell+1}]}P_{{\omega}_n}(t)
\le \max_j\min_{t \in [t_j,t_{j+1}]}P_{{\omega}_n}(t).
\end{split}
\end{equation*}
\end{proof}

\subsection{Derivatives of logarithmic potentials}
We next consider a class of kernels $g$ derived from $g_{\log}$ that were considered in \cite{ES-13}.    For an even positive integer $m$, we define the kernel:
$$g_m(t):=g_{\log}^{(m)}(t)=\frac{d^m}{dt^m}g_{\log}(t).$$
Then, for $t\in[0,2\pi)$,
$$g_2(t) =
\frac{d}{dt} \left( -\frac 12 {\cot \left( \frac{t}{2} \right)} \right)
= \frac 14 {\csc^2 \left( \frac{t}{2} \right)}\,$$
and hence
$$g_m(t) = \frac 14 {f^{(m-2)}(t)} \,,$$
where $f(t) := \csc^2(t/2)$.  Following  \cite{ES-13}, we next verify that $g_m$ satisfies the hypotheses of Theorem~\ref{Thm2.1}.
It is well known and elementary to check that
$$\tan t = \sum_{j=1}^{\infty}{a_jt^j}\,, \qquad t \in (-\pi/2,\pi/2)\,,$$
with each $a_j \geq 0$, $j=1, 2,\ldots$. Hence, if $h(t) := \tan(t/2)$, then
$$h^{(k)}(t) > 0, \qquad t \in (0,\pi), \qquad k=0,1,\ldots\,.$$
Now observe that
$$f(t) = \csc^2 \left( \frac t2 \right) = \sec^2{\frac{\pi - t}{2}} = 2h^\prime(\pi - t)\,,$$
and hence,
$$(-1)^kf^{(k)}(t) = 2h^{(k+1)}(\pi - t) > 0, \qquad t \in (0,\pi)\,.$$
This implies that if $m$ is even, then $g_m(t) = \frac 14 f^{(m-2)}(t)$ is a positive, decreasing, strictly convex function on $(0,\pi)$.
It is also clear that if $m$ is even, then $g_m$ is even since $f$ is even. Thus, $g=g_m$ satisfies the hypotheses of Theorem~\ref{Thm2.1}.

We remark that, for  an even positive integer $m$, an induction argument implies that
$$g_m(t)=p_m(r^{-2}), \qquad r=2\sin(t/2),$$
for some polynomial $p_m$ of degree $m/2$.  The induction follows from the recursive relation
\begin{equation}\label{Pm}
p_{m+2}(x)=(6 x^2 - x)p_{m}'(x)+(4 x^3 - x^2)p_{m}''(x),
\end{equation}
which is easily derived using $(r')^2=1-(r/2)^2$ and $r''=-r/4$.   Thus, $g_m$ can be expressed as a linear combination of Riesz $s$-potentials with $s=2,4,\ldots, m$ with coefficients corresponding to the polynomial $p_m$.   Table~\ref{fig1}
displays $p_m$ for $m=2,4,6,$ and $8$.

For $\omega_n \in\Omega_n$, we let
$$Q_{\omega_n}(t) := \prod_{j=1}^n{\sin \left| \frac{t-t_j}{2} \right|}\,$$
and  set
$$T_n(t) := Q_{\widetilde{\omega}_n}(t) = 2^{1-n} \sin \left|\frac{nt}{2}\right|\,.$$

Our next two results are consequences of Lemma~\ref{Lemma2.2} and Theorem~\ref{Thm2.1}, respectively.

\begin{theorem}\label{Thm2.3} Let $m$ be a positive even integer and $\omega_n\in \Omega_n$.  Then there is some $\gamma \in [0,2\pi)$ and some $j\in\{1,2,\ldots, n\}$ (with $\gamma$ and $j$ depending on $\omega_n$) such that
$$-(\log|Q_{\omega_n}|)^{(m)}(t) \geq  -(\log|T_n|)^{(m)}(t-\gamma)\,, \qquad t \in (t_j,t_{j+1})\,.$$
\end{theorem}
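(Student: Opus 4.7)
The plan is to reduce Theorem~\ref{Thm2.3} directly to Lemma~\ref{Lemma2.2} by recognizing $-(\log|Q_{\omega_n}|)^{(m)}$ as a potential sum with kernel $g_m=g_{\log}^{(m)}$. Since $g_{\log}(t)=-\log(2\sin|t/2|)$, summation over the nodes gives
$$\sum_{j=1}^{n}g_{\log}(t-t_j)=-n\log 2-\log|Q_{\omega_n}(t)|,$$
and differentiating $m\ge 1$ times kills the constant, yielding
$$-(\log|Q_{\omega_n}|)^{(m)}(t)=\sum_{j=1}^{n}g_m(t-t_j)=P_{\omega_n}(t),$$
where on the right the potential $P_{\omega_n}$ is formed with the kernel $g=g_m$. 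The same identity with $\omega_n$ replaced by $\widetilde{\omega}_n$ produces the corresponding formula for $T_n=Q_{\widetilde{\omega}_n}$.

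Because $m$ is an even positive integer, the discussion in the subsection ``Derivatives of logarithmic potentials'' above has already verified that $g_m$ is positive, even, $2\pi$-periodic, non-increasing, and strictly convex on $(0,\pi]$, so $g=g_m$ satisfies the hypotheses of Lemma~\ref{Lemma2.2}. I would therefore apply Lemma~\ref{Lemma2.2} with kernel $g_m$ and $\omega'_n=\widetilde{\omega}_n$ to obtain some $\ell\in\{0,1,\ldots,n\}$ and $\gamma\in[0,2\pi)$ such that
$$P_{\widetilde{\omega}_n}(t-\gamma)\le P_{\omega_n}(t), \qquad t\in[t_\ell,t_{\ell+1}].$$
Substituting the two identities from the previous step converts this inequality into the one asserted in the theorem (setting $j=\ell$ when $\ell\ge 1$ and $j=n$ when $\ell=0$, using the convention $t_0=t_n-2\pi$ together with $2\pi$-periodicity).

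This is essentially a bookkeeping argument once the main identity is spotted, and I do not anticipate any serious obstacle. The only technical point to check is that the $m$-th derivative of $\log|Q_{\omega_n}|$ is defined only off the set $\{t_1,\ldots,t_n\}$, but this matches precisely the open interval $(t_j,t_{j+1})$ appearing in the conclusion of the theorem.
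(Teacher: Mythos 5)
Your proposal is correct and follows essentially the same route as the paper: the paper's proof of Theorem~\ref{Thm2.3} is exactly an application of Lemma~\ref{Lemma2.2} with $g=g_m$ and $\omega'_n=\widetilde{\omega}_n$, using the identities $P_{\omega_n}(t)=-(\log|Q_{\omega_n}|)^{(m)}(t)$ and $P_{\widetilde{\omega}_n}(t)=-(\log|T_n|)^{(m)}(t)$ together with the earlier verification that $g_m$ satisfies the hypotheses. You have merely spelled out the constant $-n\log 2$ being killed by differentiation and the $\ell=0$ bookkeeping, which the paper leaves implicit.
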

\begin{proof}
This is an immediate consequence of   Lemma~\ref{Lemma2.2} with $g=g_m$ and $\omega'_n=\widetilde{\omega}_n$, and so $P_{\omega_n}(t)=-(\log|Q_{\omega_n}|)^{(m)}(t)$ and $P_{\widetilde{\omega}_n}(t)=-(\log|T_n|)^{(m)}(t)$).
\end{proof}

Since $g_m$ satisfies the hypotheses of  Theorem~\ref{Thm2.1}, we obtain the following theorem.

\begin{theorem}\label{Thm2.4}
We have
\begin{equation*}
\min_{\omega_n \in \Omega_n}{\left\{\max_{1 \leq j \leq n}{\left\{ \min_{t \in [t_j,t_{j+1}]}{-(\log|Q_{\omega_n}|)^{(m)}(t)} \right\}}\right\}} =-(\log|T_n|)^{(m)}(\pi/n)\,,
 \end{equation*}
for every even positive integer $m$.
\end{theorem}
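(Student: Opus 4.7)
The plan is to recognize this identity as an immediate specialization of Theorem~\ref{Thm2.1} to the kernel $g = g_m$. The preceding subsection already establishes that, for every even positive integer $m$, the function $g_m$ is positive, even, $2\pi$-periodic, non-increasing and strictly convex on $(0,\pi]$, so the hypotheses of Theorem~\ref{Thm2.1} are in place and nothing further needs to be checked on that side.

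The substantive step is to make the translation between $P_{\omega_n}$ (for this choice of $g$) and $-(\log|Q_{\omega_n}|)^{(m)}$ completely explicit. Since $g_{\log}(s) = -\log(2\sin|s/2|)$ and $m\ge 1$, the additive constant $-\log 2$ disappears under differentiation, so
\begin{equation*}
g_m(t-t_j) \;=\; -\frac{d^m}{dt^m}\log \sin\!\left|\frac{t-t_j}{2}\right|
\end{equation*}
for $t$ away from $t_j$. Summing over $j$ exchanges the sum of derivatives with the derivative of the sum and converts a sum of logarithms into the logarithm of a product, yielding $P_{\omega_n}(t) = -(\log|Q_{\omega_n}|)^{(m)}(t)$ and, analogously, $P_{\widetilde{\omega}_n}(t) = -(\log|T_n|)^{(m)}(t)$.

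With this identification in hand, I would invoke Theorem~\ref{Thm2.1} verbatim: it asserts that the min-max-min of $P_{\omega_n}$ over $\Omega_n$ is attained at the equally spaced configuration and equals $P_{\widetilde{\omega}_n}(\pi/n)$, which under the translation above is precisely $-(\log|T_n|)^{(m)}(\pi/n)$.

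I do not anticipate a real obstacle here: the strict convexity and monotonicity of $g_m$ have already been verified, and the formal identification $P_{\omega_n}=-(\log|Q_{\omega_n}|)^{(m)}$ is mechanical once one observes that $g_m(0)=+\infty$, which forces the inner minimum over each closed arc $[t_j,t_{j+1}]$ to be attained in the open interior where the derivatives of $\log|Q_{\omega_n}|$ are defined and the identification is unambiguous.
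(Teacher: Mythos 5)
Your proposal is correct and follows exactly the paper's route: the paper derives Theorem~\ref{Thm2.4} by observing that $g_m$ satisfies the hypotheses of Theorem~\ref{Thm2.1} (verified in the preceding subsection) and identifying $P_{\omega_n}=-(\log|Q_{\omega_n}|)^{(m)}$ and $P_{\widetilde{\omega}_n}=-(\log|T_n|)^{(m)}$. Your explicit justification of that identification (the constant $-\log 2$ vanishing under differentiation, sum of logs becoming log of a product) is a welcome bit of extra care but does not change the argument.
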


From \eqref{Pm}, one can show that the leading coefficient of $p_m$ is $(m-1)!$.  A somewhat more detailed computation using \eqref{Mn} and \eqref{Pm} yields
\begin{equation}\label{Tn}
-(\log|T_n|)^{(m)}(\pi/n)=\frac{2}{(2\pi)^{m}}\zeta(m)(m-1)!(2^m-1).
\end{equation}
Table~\ref{fig1} gives  the values  $
-(\log|T_n|)^{(m)}(\pi/n)$ for $m=2, 4, 6, 8,$ and for $n\in {\mathbb N}$.
\bigskip

\begin{table}[bth]
\begin{center}
\begin{tabular}{|c|c|c|}
\hline
$m$ & $p_m(x)$ & $-(\log|T_n|)^{(m)}(\pi/n)$\\
\hline
2 & $x$& $n^2/4$\\ \hline
4 & $6x^2-x$& $n^4/8$\\ \hline
6 & $120x^3-30x^2+x$& $n^6/4$\\ \hline
8 & $5040x^4-1680x^3+126x^2-x$& $17n^8/16$\\ \hline
\end{tabular}
\end{center}
\caption{The polynomials $p_m(x)$ and the values  $
-(\log|T_n|)^{(m)}(\pi/n)$ from \eqref{Tn} (see Theorem~\ref{Thm2.4} and Corollary~\ref{Cor3.3}) for $m=2, 4, 6, 8,$ and for $n\in {\mathbb N}$.} \label{fig1}
\end{table}

\section{Comments on polarization}
The main part of the following `polarization' theorem was proved in \cite{HKS-12}.   As observed in \cite{ES-13}, for each $\omega_n\in\Omega_n$, we may restrict the set over which we search for a minimum  to
 $$E(\omega_n) := [0,2\pi) \setminus \bigcup_{j=1}^n{ \left(t_j - \pi/n, t_j + \pi/n \right)} \quad (\text {\rm mod} \enskip 2\pi)\,.$$

\begin{theorem}\label{Thm3.1}
Let $g$ be as in Theorem~\ref{Thm2.1}. Then
\begin{equation}\label{polar}\max_{\omega_n \in \Omega_n}\left\{ \min_{t \in [0,2\pi)}{P_{\omega_n}(t)} \right\} =\max_{\omega_n \in \Omega_n}\left\{ \min_{t \in E(\omega_n)}{P_{\omega_n}(t)} \right\} = P_{\widetilde{\omega}_n}(\pi/n)\,.
\end{equation}
\end{theorem}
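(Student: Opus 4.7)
The plan is to sandwich the three quantities appearing in \eqref{polar} and show that they all equal $P_{\widetilde{\omega}_n}(\pi/n)$. The inequality
$$\max_{\omega_n \in \Omega_n}\min_{t\in[0,2\pi)} P_{\omega_n}(t) \;\leq\; \max_{\omega_n \in \Omega_n}\min_{t\in E(\omega_n)} P_{\omega_n}(t)$$
is immediate, since $E(\omega_n) \subset [0,2\pi)$. In the opposite direction, taking $\omega_n = \widetilde{\omega}_n$ shows the leftmost quantity is at least $\min_{t\in[0,2\pi)} P_{\widetilde{\omega}_n}(t) = P_{\widetilde{\omega}_n}(\pi/n)$, the last equality being the convexity observation recorded just before Theorem~\ref{Thm2.1}.

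The only nontrivial step is therefore the upper bound
$$\max_{\omega_n \in \Omega_n}\min_{t \in E(\omega_n)} P_{\omega_n}(t) \;\leq\; P_{\widetilde{\omega}_n}(\pi/n).$$
To establish it I fix an arbitrary $\omega_n \in \Omega_n$ and invoke Lemma~\ref{Lemma2.2} with the roles of the two configurations swapped: setting the lemma's $\omega_n'$ to be our $\omega_n$ and the lemma's $\omega_n$ to be $\widetilde{\omega}_n$ produces an index $\ell$ and a shift $\gamma \in [0,2\pi)$ such that
$$P_{\omega_n}(t-\gamma) \;\leq\; P_{\widetilde{\omega}_n}(t), \qquad t \in [\widetilde{t}_\ell, \widetilde{t}_{\ell+1}],$$
together with the containment $[\widetilde{t}_\ell, \widetilde{t}_{\ell+1}] \subset [t_\ell + \gamma, t_{\ell+1} + \gamma]$. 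Evaluating at the midpoint $t = \widetilde{t}_\ell + \pi/n$ of the equally-spaced arc, where $P_{\widetilde{\omega}_n}$ takes its global minimum value $P_{\widetilde{\omega}_n}(\pi/n)$ by the $2\pi/n$-periodicity of $P_{\widetilde{\omega}_n}$, yields the candidate point $t^* := \widetilde{t}_\ell + \pi/n - \gamma$ with $P_{\omega_n}(t^*) \leq P_{\widetilde{\omega}_n}(\pi/n)$.

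The main hurdle, and the reason Lemma~\ref{Lemma2.2} retains its containment clause, is to verify that $t^*$ actually lies in $E(\omega_n)$. Because $[\widetilde{t}_\ell, \widetilde{t}_{\ell+1}]$ has length exactly $2\pi/n$ and sits inside $[t_\ell + \gamma, t_{\ell+1} + \gamma]$, its midpoint $\widetilde{t}_\ell + \pi/n$ is at distance at least $\pi/n$ from each endpoint of the outer interval. Translating by $-\gamma$ shows $t^* \in [t_\ell, t_{\ell+1}]$ with arc-distance at least $\pi/n$ from both $t_\ell$ and $t_{\ell+1}$; since every remaining node $t_j$ (with $j\neq\ell,\ell+1$) lies outside the sub-arc $(t_\ell,t_{\ell+1})$, it is automatically at arc-distance at least $\pi/n$ from $t^*$ as well. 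Thus $t^* \in E(\omega_n)$, which gives $\min_{t\in E(\omega_n)} P_{\omega_n}(t) \leq P_{\omega_n}(t^*) \leq P_{\widetilde{\omega}_n}(\pi/n)$. Taking the supremum over $\omega_n$ closes the sandwich and completes the proof.
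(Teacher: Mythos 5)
Your proposal is correct and follows essentially the same route as the paper: both apply Lemma~\ref{Lemma2.2} with the roles of $\omega_n$ and $\widetilde{\omega}_n$ swapped, evaluate at the midpoint $\widetilde{t}_\ell+\pi/n$ of the equally-spaced arc, and use the containment clause $[\widetilde{t}_\ell,\widetilde{t}_{\ell+1}]\subset[t_\ell+\gamma,t_{\ell+1}+\gamma]$ to check that the resulting point lies in $E(\omega_n)$. Your explicit verification that the shifted midpoint is at arc-distance at least $\pi/n$ from every node is exactly the paper's inequality $t_\ell+\pi/n\le\pi(2\ell-1)/n-\gamma\le t_{\ell+1}-\pi/n$, just spelled out in more detail.
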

\begin{proof}  Let $\omega_n\in \Omega_n$ be  arbitrary.  The proof follows from Lemma~\ref{Lemma2.2} and is similar to the proof of Theorem~\ref{Thm2.1}, except that the roles of $\widetilde{\omega}_n$ and $\omega_n$ are switched.
 By Lemma~\ref{Lemma2.2}, there is some  $\ell\in \{0,1,\ldots,n\}$ and some $\gamma \in [0,2\pi)$ such that
$$P_{ \omega_n}(t-\gamma) \leq P_{\widetilde{\omega}_n}(t)\,, \qquad t \in [\widetilde{t}_\ell, \widetilde{t}_{\ell+1}],$$
and $[\widetilde{t}_{\ell},\widetilde{t}_{\ell+1}] \subset [t_{\ell}+\gamma,t_{\ell+1}+\gamma]$.  Then $$t_{\ell}+\pi/n\le \pi(2\ell-1)/n-\gamma\le t_{\ell+1}-\pi/n,$$ and so  $\pi(2\ell+1)/n-\gamma\in E(\omega_n)$.
We then obtain
\begin{equation*}
\begin{split}
 \min_{t \in [0,2\pi)}P_{{\omega}_n}(t)&\le\min_{t \in E(\omega_n)}P_{{\omega}_n}(t) \le P_{{\omega}_n}(\pi(2\ell+1)/n-\gamma)\\
 &\le P_{\widetilde{\omega}_n}(\pi(2\ell+1)/n)= P_{\widetilde{\omega}_n}(\pi/n),
\end{split}
\end{equation*}
which completes the proof.
\end{proof}

\begin{theorem}\label{Thm3.2} Let $\omega_n \in\Omega_n$.
Then there is a number $\theta \in [0,2\pi)$ (depending on $\omega_n$) such that
$$-(\log|Q_{\omega_n}|)^{(m)}(t) \leq  -(\log|T_n|)^{(m)}(t-\theta)\,, \qquad t \in (\theta,\theta + 2\pi/n)\,,$$
for every nonnegative even integer $m$.
\end{theorem}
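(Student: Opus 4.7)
The plan is to reduce Theorem~\ref{Thm3.2} to a single application of Lemma~\ref{Lemma2.2}, exploiting the clause in that lemma which asserts that the produced parameters $\ell$ and $\gamma$ depend only on the two configurations and \emph{not} on the kernel $g$. Since $g_{\log}$ and every $g_m$ with $m$ a positive even integer satisfy the hypotheses of Theorem~\ref{Thm2.1} (as verified earlier), a single pair $(\ell,\gamma)$ selected by the lemma will serve all these kernels at once. This is the whole point of the theorem: one $\theta$ should work for every even $m\geq 0$.

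Concretely, I would apply Lemma~\ref{Lemma2.2} to the two configurations $\widetilde{\omega}_n$ and $\omega_n$ (taken in the roles of $\omega_n$ and $\omega'_n$ of the lemma, exactly as in the proof of Theorem~\ref{Thm3.1}), obtaining $\ell\in\{0,1,\ldots,n\}$ and $\gamma\in[0,2\pi)$ such that
$$P_{\omega_n}(t-\gamma)\leq P_{\widetilde{\omega}_n}(t),\qquad t\in[\widetilde{t}_\ell,\widetilde{t}_{\ell+1}],$$
for every admissible $g$. Substituting $s=t-\gamma$ rewrites this as $P_{\omega_n}(s)\leq P_{\widetilde{\omega}_n}(s+\gamma)$ on $s\in[\theta,\theta+2\pi/n]$, where I set $\theta:=\widetilde{t}_\ell-\gamma \pmod{2\pi}$. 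Because $\widetilde{\omega}_n$ is invariant under rotation by $2\pi/n$, the function $P_{\widetilde{\omega}_n}$ has period $2\pi/n$; and since $\widetilde{t}_\ell$ is an integer multiple of $2\pi/n$, we have $s+\gamma\equiv s-\theta\pmod{2\pi/n}$, so $P_{\widetilde{\omega}_n}(s+\gamma)=P_{\widetilde{\omega}_n}(s-\theta)$. Hence
$$P_{\omega_n}(s)\leq P_{\widetilde{\omega}_n}(s-\theta),\qquad s\in[\theta,\theta+2\pi/n],$$
for every admissible $g$, with the \emph{same} $\theta$.

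Finally, I would specialize. For each positive even $m$ take $g=g_m$; then $P_{\omega_n}(s)=-(\log|Q_{\omega_n}|)^{(m)}(s)$ and $P_{\widetilde{\omega}_n}(s-\theta)=-(\log|T_n|)^{(m)}(s-\theta)$, since the additive constant $-n\log 2$ relating $\sum_j g_{\log}(t-t_j)$ to $-\log|Q_{\omega_n}(t)|$ is annihilated by any derivative of order $m\geq 1$. For $m=0$, take $g=g_{\log}$; then $P_{\omega_n}(s)=-\log|Q_{\omega_n}(s)|-n\log 2$ and $P_{\widetilde{\omega}_n}(s-\theta)=-\log|T_n(s-\theta)|-n\log 2$, and the constants $-n\log 2$ cancel from both sides, giving the $m=0$ case of the claimed inequality. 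The open-interval conclusion $t\in(\theta,\theta+2\pi/n)$ of the theorem is immediate from the closed-interval inequality we have obtained.

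The only conceptual subtlety, rather than a technical obstacle, is precisely the kernel-independence of $(\ell,\gamma)$ in Lemma~\ref{Lemma2.2}: if one merely applied Theorem~\ref{Thm3.1} for each $m$ separately, one would obtain a potentially different $\theta=\theta_m$ for each order, which is weaker than the statement we want. Everything else is a change of variables and the $2\pi/n$-periodicity of $P_{\widetilde{\omega}_n}$; there is no new analytic input beyond what Lemma~\ref{Lemma2.2} already supplies.
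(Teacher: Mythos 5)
Your proposal is correct and follows essentially the same route as the paper: apply Lemma~\ref{Lemma2.2} with $\widetilde{\omega}_n$ and $\omega_n$ in the roles of $\omega_n$ and $\omega_n'$ respectively, set $\theta=\widetilde{t}_\ell-\gamma$, and use the $2\pi/n$-periodicity of $P_{\widetilde{\omega}_n}$ together with the kernel-independence of $(\ell,\gamma)$ to get a single $\theta$ for all even $m$. Your explicit handling of the additive constant $-n\log 2$ in the $m=0$ case is a small point the paper glosses over, but it is not a different argument.
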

\begin{proof}
Let $m$ be a nonnegative even integer.
We apply Lemma~\ref{Lemma2.2} with $g=g_m$, $\omega_n'=\omega_n$ and $\omega_n=\widetilde{\omega}_n$ (in which case, $P_{\omega_n}(t)=-(\log|Q_{\omega_n}|)^{(m)}(t)$ and $P_{\widetilde{\omega}_n}(t)=-(\log|T_n|)^{(m)}(t)$) to deduce that there is an $\ell \in \{1,2,\ldots,n\}$ and a number $\gamma \in [0,2\pi)$
(depending  on $\omega_n$) such that
$$-(\log |Q_{\omega_n}|)^{(m)}(t-\gamma)  \leq -(\log|T_n|)^{(m)}(t)\,, \qquad t \in [\widetilde{t}_{\ell},\widetilde{t}_{\ell+1})\,,$$ which can be rewritten using
$\theta:=\widetilde{t}_{\ell}-\gamma$, $u:=t-\gamma$, and the fact that $T_n$ is $2\pi/n$ periodic as
$$-(\log |Q_{\omega_n}|)^{(m)}(u)  \leq -(\log|T_n|)^{(m)}(u-\theta)\,, \qquad u \in [\theta,\theta+2\pi/n)\,.$$
\end{proof}

\begin{corollary}\label{Cor3.3}
We have
\begin{equation*}
 \begin{split}
\max_{\omega_n \in \Omega_n} \left\{ \min_{t \in [0,2\pi)}{-(\log|Q_{\omega_n}|)^{(m)}(t)} \right\}&=\max_{\omega_n \in \Omega_n} \left\{ \min_{t \in E(\omega_n)}{-(\log|Q_{\omega_n}|)^{(m)}(t)} \right\}\\
&= -(\log|T_n|)^{(m)}(\pi/n) \end{split}
 \end{equation*}
for every even integer $m$.
\end{corollary}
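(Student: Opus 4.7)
The plan is to deduce Corollary~\ref{Cor3.3} as an immediate application of Theorem~\ref{Thm3.1} to the kernel $g=g_m$ for each nonnegative even integer $m$. Theorem~\ref{Thm3.1} already delivers both equalities in~\eqref{polar} for any admissible kernel, so the task reduces to (a) confirming that $g_m$ is admissible and (b) translating its potential $P_{\omega_n}$ into the derivative-of-logarithm form appearing in the corollary.

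First I would verify admissibility. For $m\geq 2$ this was already carried out in Section~2.4: writing $g_m=\tfrac{1}{4}f^{(m-2)}$ with $f(t)=\csc^2(t/2)$, using the identity $f(t)=2h'(\pi-t)$ where $h(t)=\tan(t/2)$, and observing that $(-1)^kf^{(k)}>0$ on $(0,\pi)$ shows that $g_m$ is positive, even, $2\pi$-periodic, and strictly convex and decreasing on $(0,\pi]$. For $m=0$, $g_0=g_{\log}$ is admissible by the remark in Section~2.1.

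Next I would carry out the translation. From $g_{\log}(t)=-\log(2\sin|t/2|)$, summation yields
$$\sum_{j=1}^n g_{\log}(t-t_j)=-\log|Q_{\omega_n}(t)|-n\log 2,$$
so for $g=g_m$ with $m\geq 1$ we have $P_{\omega_n}(t)=-(\log|Q_{\omega_n}|)^{(m)}(t)$ (the constant $-n\log 2$ being annihilated under differentiation), and analogously $P_{\widetilde{\omega}_n}(t)=-(\log|T_n|)^{(m)}(t)$. Substituting these identities into Theorem~\ref{Thm3.1} produces exactly the chain of equalities in Corollary~\ref{Cor3.3}. For $m=0$ the same substitution works after uniformly adding the constant $n\log 2$ to each of the three quantities in~\eqref{polar}; one then checks that $-\log|T_n(\pi/n)|=(n-1)\log 2$ matches $n\log 2+P_{\widetilde{\omega}_n}(\pi/n)$, so the identity is recovered.

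No genuine obstacle appears here: the substantive content has been absorbed into Theorem~\ref{Thm3.1} (through Lemma~\ref{Lemma2.2}) and into the convexity verification of Section~2.4. The only minor bookkeeping point is the additive constant in the $m=0$ case, which disappears once it is tracked through all three expressions.
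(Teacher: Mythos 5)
Your proposal is correct and follows the paper's own route exactly: the paper likewise derives Corollary~\ref{Cor3.3} as an immediate consequence of Theorem~\ref{Thm3.1} applied with $g=g_m$, relying on the admissibility check from Section~2.4 and the identification $P_{\omega_n}(t)=-(\log|Q_{\omega_n}|)^{(m)}(t)$ already recorded in the proofs of Theorems~\ref{Thm2.3} and~\ref{Thm3.2}. Your extra care with the additive constant $n\log 2$ in the $m=0$ case is a correct and worthwhile detail that the paper leaves implicit.
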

\begin{proof}
 This  is an immediate corollary of   Theorem~\ref{Thm3.1}.
\end{proof}

%\begin{proposition} Let $m$ be an even positive integer.
%$$g_m(t)=P_m(r^{-2}), \qquad r=2\sin(t/2),$$
%for some polynomial $P_m$ of degree $m/2$.
%\end{proposition}
%\begin{proof}
% We proceed by induction.
% First, recall that $g_2(t)=r^{-2}$.    Assume,
% $g_m(t)=P_m(r^{-2}),$
% where $P_m$ is a polynomial of degree $m/2$.
% Then, using $(r')^2=1-(r/2)^2$ and $r''=-r/4$, we obtain
% \begin{equation*}
% \begin{split}
% g_{m+2}(t)&=(P_m'(r^{-2})(-2r^{-3}r')'\\
% &=P_m''(r^{-2})4r^{-6}(r')^2+P_m'(r^{-2})(6r^{-4}(r')^2-2r^{-3}r'')\\
% &=P_m''(r^{-2})4r^{-6}(1-(r/2)^2)+P_m'(r^{-2})(6r^{-4}(1-(r/2)^2)+ r^{-2}/2)\\
% &=P_m''(r^{-2})(4r^{-6}-r^{-4})+P_m'(r^{-2})(6r^{-4}-r^{-2})\\
% &=:P_{m+2}(r^{-2}),
% \end{split}
% \end{equation*}
% where $P_{m+2}$ is a polynomial of degree $m/2+1$.
%\end{proof}

\end{document}